\newtheorem{theorem}{Theorem}
\newtheorem{lemma}[theorem]{Lemma}
\theoremstyle{definition}
\newtheorem{definition}[theorem]{Definition}
\theoremstyle{remark}
\numberwithin{equation}{section}
\DeclareMathOperator{\supp}{supp}
\newcommand{\abs}[1]{\left\vert#1\right\vert}
\newcommand{\set}[1]{\left\{#1\right\}}
\newcommand{\proin}[2]{\left<#1,#2\right>}
\newcommand{\norm}[1]{\left\Vert#1\right\Vert}
\begin{document}
\title[]{On the Calder\'{o}n-Zygmund structure of Petermichl's kernel. Weighted inequalities}
%
%


\author[]{Hugo Aimar}
\email{haimar@santafe-conicet.gov.ar}
\author[]{Ivana G\'{o}mez}
\email{ivanagomez@santafe-conicet.gov.ar}
\thanks{This work was supported by the CONICET (grant PIP-112-2011010-0877, 2012); ANPCyT-MINCyT (grants PICT-2568,2012; PICT-3631,2015); and UNL (grant CAID-50120110100371LI,2013)}
\subjclass[2010]{Primary 42B20, 42B25; Secondary 42C40}
%
%

\begin{abstract}
We show that Petermichl's dyadic operator $\mathcal{P}$ (S.~Petermichl (2000), \textit{Dyadic shifts and a logarithmic estimate for Hankel
operators with matrix symbol}) is a Calder\'{o}n-Zygmund type operator on an adequate metric normal space of homogeneous type. As a consequence of a general result on spaces of homogeneous type, we get weighted boundedness of the maximal operator $\mathcal{P}^*$ of truncations of the singular integral. We show that dyadic $A_p$ weights are the good weights for the maximal operator $\mathcal{P}^*$ of the scale truncations of $\mathcal{P}$.
\end{abstract}
\maketitle

\section{Introduction}
In \cite{Petermichl00}, Stefanie Petermichl proves a remarkable identity that provides the Hilbert kernel $\frac{1}{x-y}$ in $\mathbb{R}$ as a mean value of dilations and translations of a basic kernel defined in terms of dyadic families on $\mathbb{R}$. The basic kernel for a fixed dyadic system $\mathcal{D}$ is described in terms of Haar wavelets. Assume that $\mathcal{D}$ is the standard dyadic family on $\mathbb{R}$, i.e. $\mathcal{D}=\cup_{j\in \mathbb{Z}}\mathcal{D}^j$ with $\mathcal{D}^j=\{I^j_k: k\in \mathbb{Z}\}$ and $I^j_k=[\frac{k}{2^j},\frac{k+1}{2^j})$. Let $\mathscr{H}$ be the standard Haar system built on the dyadic intervals in $\mathcal{D}$. There is a natural bijection between $\mathscr{H}$ and $\mathcal{D}$. We shall use $\mathcal{D}$ as the index set and we shall write $h_I$ to denote the function $h_I(x)=\abs{I}^{-1/2}(\mathcal{X}_{I^-}(x)-\mathcal{X}_{I^+}(x))$ where $I^-$ and $I^+$ are the respective left and right halves of $I$, $\mathcal{X}_E$ is, as usual, the indicator function of $E$ and $\abs{E}$ denote the Lebesgue measure of the measurable set $E$. With the above notation, the basic Petermichl's operator on $L^2(\mathbb{R})$ is given by
\begin{equation}
\mathcal{P}f(x)=\sum_{I\in\mathcal{D}}\proin{f}{h_I}(h_{I^-}(x)-h_{I^+}(x)),
\end{equation}
where, as usual, $\proin{f}{h_I}=\int_{\mathbb{R}}f(y)h_I(y) dy$. Hence, at least formally, the operator $\mathcal{P}$ is defined by the nonconvolution nonsymmetric kernel
\begin{align*}
P(x,y) &= \sum_{h\in\mathcal{D}}h_I(y)(h_{I^-}(x)-h_{I^+}(x))\\
&= P^+(x,y)+P^-(x,y);
\end{align*}
with
\begin{equation}
P^+(x,y)=\sum_{I\in\mathcal{D}^+}h_I(y)(h_{I^-}(x)-h_{I^+}(x))
\end{equation}
and $\mathcal{D}^+=\{I^j_k\in\mathcal{D}: k\geq 0\}$.

Let us observe that for $x\geq 0$, $y\geq 0$ and $x\neq y$ the series
$\sum_{I\in\mathcal{D}^+}h_I(y)[h_{I^-}(x)-h_{I^+}(x)]$ is absolute convergent. In fact
\begin{align*}
\sum_{I\in\mathcal{D}^+}\abs{h_I(y)}\abs{h_{I^-}(x)-h_{I^+}(x)} &=
\sum_{I\in\mathcal{D}^+, I\supseteq I(x,y)}\frac{1}{\sqrt{\abs{I}}}\abs{h_{I^-}(x)-h_{I^+}(x)}\\
&\leq \sum_{I\in\mathcal{D}^+, I\supseteq I(x,y)}\frac{2\sqrt{2}}{\abs{I}}=\frac{4\sqrt{2}}{\abs{I(x,y)}}
\end{align*}
where $I(x,y)$ is the smallest dyadic interval in $\mathbb{R}$ containing $x$ and $y$.

The aim of this paper is twofold. First we show that $\mathcal{P}^+$ (and $\mathcal{P}^-$) the operator induced by the kernel $P^+$ (resp. $P^-$) is of Calder\'{o}n--Zygmund type in the normal space of homogeneous type $\mathbb{R}^+$ (resp. $\mathbb{R}^-$) with the dyadic ultrametric $\delta(x,y)=\inf\{\abs{I}: x,y\in I \textrm{ and } I\in\mathcal{D}\}$ and Lebesgue measure. Second, by an application of the known weighted norm inequalities for singular integrals in normal spaces of homogeneous type, we show that the operator $\mathcal{P}^*f(x)=\sup_{\{l,m\in \mathbb{Z}\}}\abs{\sum_{\{I\in\mathcal{D}^+,2^l\leq\abs{I}<2^m\}}\proin{f}{h_I}(h_{I^-}(x)-h_{I^+}(x))}$ is bounded on $L^p(\mathbb{R}^+,wdx)$ if and only if $w\in A_p^{dy}(\mathbb{R}^+)$ when $1<p<\infty$.

In \S\ref{sec:PetermichlOperatorAsCZ} we prove that $\mathcal{P}^+$ is of Calder\'{o}n--Zygmund in an adequate space of homogeneous type. In Section~\ref{sec:weights} we give the characterization of the dyadic weights as those for which the maximal operator of the scale truncations of $\mathcal{P}^+$ is bounded in $L^p(\mathbb{R}^+,wdx)$ for $1<p<\infty$.

\section{Petermichl's operator as a Calder\'{o}n--Zygmund operator}\label{sec:PetermichlOperatorAsCZ}
Following \cite{MeyCoif97}, a linear and continuous operator $T:\mathscr{D}(\mathbb{R}^n)\to\mathscr{D}'(\mathbb{R}^n)$, with $\mathscr{D}$ and $\mathscr{D}'$ the test functions and the distributions on $\mathbb{R}^n$, is a Calder\'{o}n-Zygmund operator if there exists $K\in L^1_{loc}(\mathbb{R}^n\times \mathbb{R}^n\setminus\Delta)$ where $\Delta$ is the diagonal of $\mathbb{R}^n\times \mathbb{R}^n$ such that
\begin{enumerate}
\item there exists $C_0>0$ with
\begin{equation*}
\abs{K(x,y)}\leq \frac{C_0}{\abs{x-y}^n},\quad x\neq y;
\end{equation*}
\item\label{item:KregularityReal} there exist $C_1$ and $\gamma>0$ such that
\begin{enumerate}[(\ref{item:KregularityReal}.a)]
\item $\abs{K(x',y)-K(x,y)}\leq C_1\dfrac{\abs{x'-x}^{\gamma}}{\abs{x-y}^{n+\gamma}}$ when $2\abs{x'-x}\leq\abs{x-y}$;
\item $\abs{K(x,y')-K(x,y)}\leq C_1\dfrac{\abs{y'-y}^{\gamma}}{\abs{x-y}^{n+\gamma}}$ when $2\abs{y'-y}\leq\abs{x-y}$;
\end{enumerate}
\item $T$ extends to $L^2(\mathbb{R}^n)$ as a continuous linear operator;
\item for $\varphi$ and $\psi\in\mathscr{D}(\mathbb{R}^n)$ with $\supp\varphi\cap\supp\psi=\emptyset$ we have
\begin{equation*}
\proin{T\varphi}{\psi}=\iint_{\mathbb{R}^n\times \mathbb{R}^n}K(x,y)\varphi(x)\psi(y) dx dy.
\end{equation*}
\end{enumerate}
With a little effort the notions of Calder\'{o}n-Zygmund operator and Calder\'{o}n-Zygmund kernel $K$ (i.e. satisfying (1) and (2)) can be extended to normal metric spaces of homogeneous type. Even when the formulation can be stated in quasi-metric spaces for our application it shall be enough the following context. Let $(X,d)$ be a metric space. If there exists a Borel measure $\mu$ on $X$ such that for some constants $0<\alpha\leq\beta<\infty$ such that the inequalities $\alpha r\leq \mu(B(x,r))\leq \beta r$ hold for every $r>0$ and every $x\in X$, we shall say that $(X,d,\mu)$ is a normal space. As usual $B(x,r)=\{y\in X: d(x,y)<r\}$. In particular, $(X,d,\mu)$ is a space of homogeneous type in the sense of \cite{CoWe71}, \cite{MaSe79Lip}, \cite{MaSe79Decomp}, \cite{Aimar85}, and many problems of harmonic analysis find there a natural place to be solved.

In this setting in \cite{MaSe79Lip} a fractional order inductive limit topology is given to the space of compactly supported Lipschitz $\gamma$ functions ($0<\gamma<1$). We shall still write $\mathscr{D}=\mathscr{D}(X,d)$ to denote this test functions space. And $\mathscr{D}'=\mathscr{D}'(X,d)$ its dual, the space of distributions. So, the extension of the definition of Calder\'{o}n-Zygmund operators to this setting becomes natural.

\begin{definition}\label{def:CalderonZygmundOperator}
Let $(X,d,\mu)$ be a normal metric measure space such that continuous functions are dense in $L^1(X,\mu)$. We say that a linear and continuous operator $T:\mathscr{D}\to\mathscr{D}'$ is Calder\'{o}n-Zygmund on $(X,d,\mu)$ if there exists $K\in L^1_{loc}(X\times X\setminus\Delta)$, where $\Delta$ is the diagonal in $X\times X$, such that
\begin{enumerate}[(i)]
\item there exists $C_0>0$ with
\begin{equation*}
\abs{K(x,y)}\leq\frac{C_0}{d(x,y)},\quad x\neq y;
\end{equation*}
\item\label{item:KregularityMetric} there exist $C_1>0$ and $\gamma>0$ such that
\begin{enumerate}[(\ref{item:KregularityMetric}.a)]
\item $\abs{K(x',y)-K(x,y)}\leq C_1\dfrac{d(x',x)^{\gamma}}{d(x,y)^{1+\gamma}}$ when $2d(x',x)\leq d(x,y)$;
\item $\abs{K(x,y')-K(x,y)}\leq C_1\dfrac{d(y,y')^{\gamma}}{d(x,y)^{1+\gamma}}$ when $2d(y',y)\leq d(x,y)$;
\end{enumerate}
\item $T$ extends to $L^2(X,\mu)$ as a continuous linear operator;
\item for $\varphi$ and $\psi\in\mathscr{D}$ with $d(\supp\varphi,\supp\psi)>0$ we have
\begin{equation*}
\proin{T\varphi}{\psi}=\iint_{X\times X}K(x,y)\varphi(x)\psi(y) d(\mu\times\mu)(x,y).
\end{equation*}
\end{enumerate}
\end{definition}

Our first result shows that $\mathcal{P}^+$ and $\mathcal{P}^-$ are Calder\'{o}n-Zygmund operators. In what follows we shall keep using $P$ for $P^+$ and $\mathcal{P}$ for $\mathcal{P}^+$.
\begin{theorem}\label{thm:PetermichlOperatorisCZ}
There exists a metric $\delta$ on $\mathbb{R}^+=\{x: x\geq 0\}$ such that $(\mathbb{R}^+,\delta,\abs{\cdot})$ is a normal space where $\delta$-continuous functions are dense in $L^1(\mathbb{R}^+,dx)$ and $P$ can be written, for $x\neq y$ both in $\mathbb{R}^+$, as
\begin{equation}
P(x,y)=\dfrac{\Omega(x,y)}{\delta(x,y)},
\end{equation}
where $\Omega$ is bounded and $\delta$-smooth. Moreover, $\mathcal{P}$ is a Calder\'{o}n-Zygmund operator on $(\mathbb{R}^+,\delta,\abs{\cdot})$.
\end{theorem}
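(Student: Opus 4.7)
The metric to use is the dyadic ultrametric $\delta(x,y)=\inf\{\abs{I}:x,y\in I,\ I\in\mathcal{D}^+\}$ already introduced in the text. For $x\neq y$ its values lie in $\{2^k:k\in\mathbb{Z}\}$, and the ultrametric inequality follows from the nested chain structure of the dyadic ancestors of a common point. Given $r>0$, choose $k\in\mathbb{Z}$ with $2^{k-1}<r\leq 2^k$; then $B_\delta(x,r)$ coincides with the unique dyadic interval of length $2^{k-1}$ containing $x$, so $r/2\leq\abs{B_\delta(x,r)}=2^{k-1}<r$ and $(\mathbb{R}^+,\delta,\abs{\cdot})$ is normal with $\alpha=1/2$ and $\beta=1$. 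Indicators of dyadic intervals are locally constant in $\delta$, hence $\delta$-continuous, and their finite linear combinations are dense in $L^1(\mathbb{R}^+,dx)$, so the density hypothesis of Definition~\ref{def:CalderonZygmundOperator} is fulfilled.

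For the size estimate, fix $x,y\in\mathbb{R}^+$ with $x\neq y$, set $\delta(x,y)=2^{k_0}$, and let $I^{(k)}$ denote the unique dyadic interval of length $2^k$ containing $I(x,y)$, for $k\geq k_0$. A term $h_I(y)(h_{I^-}(x)-h_{I^+}(x))$ in the series defining $P(x,y)$ vanishes unless $y\in I$ and $x\in I$, i.e.\ unless $I=I^{(k)}$ for some $k\geq k_0$; for such $I^{(k)}$ one has $\abs{h_{I^{(k)}}(y)}=2^{-k/2}$ and $\abs{h_{(I^{(k)})^-}(x)-h_{(I^{(k)})^+}(x)}\leq 2^{(1-k)/2}$ since only one of the two summands is nonzero. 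The series is therefore dominated by $\sum_{k\geq k_0}2^{1/2}\cdot 2^{-k}=2^{3/2}/\delta(x,y)$, and $\Omega(x,y):=\delta(x,y)\,P(x,y)$ yields the desired representation with $\abs{\Omega}\leq 2\sqrt{2}$.

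For the $\delta$-regularity of $P$, suppose $2\delta(x,x')\leq\delta(x,y)$. Since $\delta$ takes only dyadic values this forces $\delta(x,x')\leq 2^{k_0-1}$, and the ultrametric inequality gives $\delta(x',y)=\delta(x,y)$ and $I(x',y)=I(x,y)$, so the two series for $P(x,y)$ and $P(x',y)$ are indexed by the same family $\{I^{(k)}:k\geq k_0\}$. Now $z\mapsto h_{(I^{(k)})^-}(z)-h_{(I^{(k)})^+}(z)$ is constant on each of the four quarters of $I^{(k)}$, and $x,x'$ share a quarter iff $\delta(x,x')\leq 2^{k-2}$. Hence either $\delta(x,x')\leq 2^{k_0-2}$, in which case every $k\geq k_0$ gives the same value and $P(x,y)=P(x',y)$; or $\delta(x,x')=2^{k_0-1}$, in which case only $k=k_0$ can fail to cancel, with a contribution at most $2\cdot 2^{-k_0/2}\cdot 2^{(1-k_0)/2}=2\sqrt{2}/\delta(x,y)$. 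This yields condition (ii.a) of Definition~\ref{def:CalderonZygmundOperator} with any $\gamma>0$ and $C_1=2\sqrt{2}\cdot 2^\gamma$, and in terms of $\Omega=\delta\cdot P$ this says $\abs{\Omega(x,y)-\Omega(x',y)}\leq 2\sqrt{2}$ in the only possibly nontrivial case, so $\Omega$ is $\delta$-smooth. Condition (ii.b) is simpler still: under $2\delta(y,y')\leq\delta(x,y)$, $y$ and $y'$ lie in the same half of $I^{(k)}$ for every $k\geq k_0$, so $h_{I^{(k)}}(y)=h_{I^{(k)}}(y')$ and $P(x,y)=P(x,y')$ identically.

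The $L^2$-continuity is the classical Petermichl estimate: decompose $\mathcal{P}f=Af-Bf$ with $Af=\sum_I\proin{f}{h_I}h_{I^-}$ and $Bf=\sum_I\proin{f}{h_I}h_{I^+}$. As $I$ runs over $\mathcal{D}^+$, $\{h_{I^-}\}$ and $\{h_{I^+}\}$ are orthonormal subfamilies of the Haar system, so Bessel gives $\norm{Af}_2,\norm{Bf}_2\leq\norm{f}_2$ and hence $\norm{\mathcal{P}f}_2\leq 2\norm{f}_2$. Condition (iv) follows by expanding $\proin{\mathcal{P}\varphi}{\psi}$ in series and interchanging sum and integral; absolute convergence is guaranteed by the size bound because $\delta(\supp\varphi,\supp\psi)>0$ forces a uniform upper bound on $\abs{P}$ over $\supp\psi\times\supp\varphi$. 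The main obstacle, in my view, is the combinatorial accounting of paragraph three: one must identify with precision which dyadic scale is the first to separate the perturbed point from the original, and it is exactly the ultrametric character of $\delta$ that lets all but at most one term of the difference series vanish.
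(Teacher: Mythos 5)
Your proposal is correct, and while the overall architecture (choice of $\delta$, normality via $B_\delta(x,r)=I^m_{k(x)}$, density of dyadic step functions, the $L^2$ bound via the orthonormal subfamilies $\{h_{I^-}\}$, $\{h_{I^+}\}$, and the summation over ancestors of $I(x,y)$ giving $P=\Omega/\delta$ with $\abs{\Omega}\le 2\sqrt2$) coincides with the paper's, your treatment of the regularity conditions (ii.a)--(ii.b) is genuinely different. The paper factors $P=\Omega/\delta$ with $\Omega=\sqrt2\sum_{m\ge0}2^{-m}\Omega_{I^{(m)}(x,y)}$, proves that each block $\Omega_J$ is $\delta$-Lipschitz with constant $O(\abs{J}^{-1})$ (using that $\mathcal{X}_I$ is $\delta$-Lipschitz with constant $(2\abs{I})^{-1}$), and then sums the series and adds the elementary estimate for the difference of the factors $1/\delta$; this yields (ii) with $\gamma=1$. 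You instead exploit the ultrametric rigidity directly on the Haar series: once $2\delta(x,x')\le\delta(x,y)$ forces $I(x',y)=I(x,y)$, the function $h_{(I^{(k)})^-}-h_{(I^{(k)})^+}$ is constant on quarters, so every term of the difference series cancels except possibly the single bottom-scale term (and in the second variable every term cancels, so $P(x,y')=P(x,y)$ exactly). This buys a strictly stronger conclusion — the Hölder condition for every $\gamma>0$, with the difference vanishing identically off the critical scale $\delta(x,x')=\delta(x,y)/2$ — whereas the paper's block-by-block Lipschitz argument is the one that would survive if the building blocks were merely Lipschitz rather than locally constant. Your verification of (iv) is terser than the paper's (which restricts to dyadic simple functions so that only finitely many Haar terms intervene), but the domination $\sum_I\abs{h_I(y)}\abs{h_{I^-}(x)-h_{I^+}(x)}\abs{\varphi(y)\psi(x)}\le 2\sqrt2\,\varepsilon^{-1}\abs{\varphi(y)\psi(x)}$ on $\supp\psi\times\supp\varphi$ that you invoke does legitimately justify the interchange, so there is no gap.
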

\begin{proof}
For $x\neq y$ two points in $\mathbb{R}^+$, define $\delta(x,y)=\inf\{\abs{I}: x,y\in I\in\mathcal{D}\}$. Define also $\delta(x,x)=0$ for every $x\in \mathbb{R}^+$. It is easy to see that $\delta$ is an ultra-metric on $\mathbb{R}^+$. This means that the triangle inequality improves to $\delta(x,z)\leq\sup\{\delta(x,y),\delta(y,z)\}$ for every $x$, $y$ and $z\in \mathbb{R}^+$. Notice that $\abs{x-y}\leq\delta(x,y)$ but they are certainly not equivalent. Also, for $x\in \mathbb{R}^+$ and $r>0$ given, taking $m\in \mathbb{Z}$ such that $2^{-m}<r\leq 2^{-m+1}$ we see that $B_{\delta}(x,r)=\{y\in \mathbb{R}^+: \delta(x,y)<r\}=\{y\in \mathbb{R}^+:\delta(x,y)\leq 2^{-m}\}=I^m_{k(x)}$, where $k(x)$ is the only index $k\in \mathbb{N}\cup\{0\}$ such that $x\in I^m_k$. Hence the Lebesgue measure of $B_{\delta}(x,r)$ is that of the interval $I^m_{k(x)}$. Precisely, $\abs{B_{\delta}(x,r)}=2^{-m}$. So that $\frac{r}{2}\leq \abs{B_{\delta}(x,r)}<r$, for every $x\in \mathbb{R}^+$ and every $r>0$. In terms of our above definitions $(\mathbb{R}^+,\delta,\abs{ \cdot })$ is a normal metric space. The integrability properties of powers of $\delta$ resemble completely those, of the powers of $x$. In fact, for fixed $x\in \mathbb{R}^+$, the function of $y\in \mathbb{R}^+$ given by $1/\delta^{\alpha}(x,y)$ is integrable inside a $\delta$-ball when $\alpha<1$. It is integrable outside a $\delta$-ball when $\alpha>1$. In particular, $1/\delta(x,y)$ is neither locally nor globally integrable on $\mathbb{R}^+$.

Notice now that real valued simple functions built on the dyadic intervals are continuous as functions defined on $(\mathbb{\mathbb{R}}^+,\delta)$. In fact, for $I\in\mathcal{D}$ we have that $\abs{\mathcal{X}_I(x)-\mathcal{X}_I(y)}$ equals zero for $x$ and $y$ in $I$ or for $x$ and $y$ outside $I$. Assume that $x\in I$ and $y\notin I$, then  $\delta(x,y)\geq 2\abs{I}$. So that $\abs{\mathcal{X}_I(x)-\mathcal{X}_I(y)}\leq\delta(x,y)(2\abs{I})^{-1}$ for every $x$ and $y\in \mathbb{R}^+$. In other words, for $I\in\mathcal{D}$, $\mathcal{X}_I$ is Lipschitz with respect to $\delta$ with constant  $(2\abs{I})^{-1}$. Hence $\delta$-continuous functions are dense in $L^1(\mathbb{R}^+,dx)$.

The operator $\mathcal{P}$ is actually defined as an operator in $L^2(\mathbb{R}^+,dx)$. For $f\in L^2(\mathbb{R}^+,dx)$,
\begin{align*}
\mathcal{P}f(x) &= \sum_{I\in\mathcal{D}^+}\proin{f}{h_I}(h_{I^-}(x)-h_{I^+}(x))\\
&= \sum_{I\in\mathcal{D}^+}\proin{f}{h_I}h_{I^-}(x)-\sum_{I\in\mathcal{D}^+}\proin{f}{h_I}h_{I^+}(x).
\end{align*}
Hence $\norm{\mathcal{P}f}_2^2\leq 2\sum_{I\in\mathcal{D}^+}\abs{\proin{f}{h_I}}^2=2\norm{f}_2^2$, which proves (iii) in Definition~\ref{def:CalderonZygmundOperator}. In particular, if $\varphi$ is a simple function built on the dyadic intervals, we see that $\mathcal{P}\varphi\in L^2(\mathbb{R}^+,dx)$. So that when $\psi$ is another simple function such that $\delta(\supp\varphi,\supp\psi)>0$, the two variables function $F(x,y)=\varphi(x)\psi(y)$ is simple in $\mathbb{R}^+\times \mathbb{R}^+$ and for some $\varepsilon>0$, $\supp F\cap\{\delta<\varepsilon\}=\emptyset$, we have that, since only a finite subset of $\mathcal{D}^+$ is actually involved,
\begin{align*}
\iint_{\mathbb{R}^+\times \mathbb{\mathbb{R}}^+}&\left(\sum_{I\in\mathcal{D}^+}h_I(y)[h_{I^-}(x)-h_{I^+}(x)]\right)\varphi(y)\psi(x)dy dx\\
&= \int_{x\in \mathbb{R}^+}\left(\int_{y\in \mathbb{R}^+}P(x,y)\varphi(y)\right)\psi(x) dx\\
&=\int_{x\in \mathbb{R}^+}\mathcal{P}\varphi(x)\psi(x) dx\\
&= \proin{\mathcal{P}\varphi}{\psi}.
\end{align*}
Hence $P(x,y)=\sum_{I\in\mathcal{D}^+}h_{I}(y)[h_{I^-}(x)-h_{I^+}(x)]$ is the kernel for $\mathcal{P}$. Let us now show that $P(x,y)=\frac{\Omega(x,y)}{\delta(x,y)}$ for $x\neq y$.
 For $J\in\mathcal{D}^+$ define
\begin{equation*}
\Omega_J(x,y)=\Theta_J^1(y)\Theta^2_J(x)
\end{equation*}
where
\begin{eqnarray*}
  \Theta^1_J(y)&=&\mathcal{X}_{J^-}(y)-\mathcal{X}_{J^+}(y)   \\
  \Theta^2_J(x)&=&(\mathcal{X}_{J^{-+}}(x)+\mathcal{X}_{J^{+-}}(x))-(\mathcal{X}_{J^{--}}(x)+\mathcal{X}_{J^{++}}(x)).
\end{eqnarray*}
Let us denote with $I(x,y)$ the smallest interval containing $x$ and $y$, then we have
\begin{equation*}
P(x,y)=\sum_{I\in\mathcal{D}^+}h_I(y)[h_{I^-}(x)-h_{I^+}(x)]=\sqrt{2}\sum_{I\in\mathcal{D}^+, I\supseteq I(x,y)}
\frac{1}{\abs{I}}\Omega_I(x,y).
\end{equation*}
Since $\abs{I(x,y)}=\delta(x,y)$ and in the last series we are adding on all the dyadic ancestors of $I(x,y)$, including $I(x,y)$
itself,
\begin{equation*}
P(x,y)=\frac{\sqrt{2}}{\delta(x,y)}\sum_{m=0}^{\infty}\frac{1}{2^m}\Omega_{I^{(m)}(x,y)}(x,y)=\frac{\Omega(x,y)}{\delta(x,y)}
\end{equation*}
with $I^{(m)}(x,y)$ the $m$-th ancestor of $I(x,y)$ and
\begin{equation*}
\Omega(x,y)=\sqrt{2}\sum_{m=0}^{\infty}2^{-m}\Omega_{I^{(m)}(x,y)}(x,y).
\end{equation*}
Hence (i) in Definition~\ref{def:CalderonZygmundOperator} holds with $C_0=2^{5/2}$.

Let us check (ii.a). Let $x$, $y$ and $x'\in \mathbb{R}^+$ be such that $\delta(x,x')\leq\frac{1}{2}\delta(x,y)$. Let $I(x,y)$ be the smallest dyadic
interval containing $x$ and $y$. Then $\abs{I(x,y)}=\delta(x,y)$. In a similar way $\abs{I(x,x')}=\delta(x,x')$ and
$\abs{I(x',y)}=\delta(x',y)$. Since those three intervals are all dyadic and since $\abs{I(x,x')}\leq\frac{1}{2}\abs{I(x,y)}$,
 we necessarily must have that $x'$ belongs to the same half of $I(x,y)$ as $x$ does. Hence $I(x',y)=I(x,y)$ and certainly
 also are the same all the ancestors $I^{(m)}(x',y)=I^{(m)}(x,y)$. Now,
\begin{align*}
\frac{1}{\sqrt{2}}\abs{P(x',y)-P(x,y)}&=\abs{\frac{\Omega(x',y)}{\delta(x',y)}-\frac{\Omega(x,y)}{\delta(x,y)}}\\
&\leq \frac{\abs{\Omega(x',y)-\Omega(x,y)}}{\delta(x,y)}
+\abs{\Omega(x',y)}\abs{\frac{1}{\delta(x',y)}-\frac{1}{\delta(x,y)}}\\
&=I+II.
\end{align*}
In order to estimate $I$, let us first explore the $\delta$-regularity of each $\Omega_J$. Let us prove that
\begin{enumerate}[(a)]
\item for fixed $y\in \mathbb{R}^+$ we have that $\abs{\Omega_J(x',y)-\Omega_J(x,y)}\leq \frac{8}{\abs{J}}\delta(x,x')$; and
\item for fixed $x\in \mathbb{R}^+$, $\abs{\Omega_J(x,y')-\Omega_J(x,y)}\leq \frac{2}{\abs{J}}\delta(y,y')$.
\end{enumerate}
Let us check (a). The regularity in the second variable is similar. Since the indicator function of a dyadic interval $I$ is $\delta$-Lipschitz with constant $\frac{1}{2\abs{I}}$, we have
\begin{align*}
\abs{\Omega_J(x',y)-\Omega_J(x,y)}&=\abs{\Theta^1_J(y)(\Theta^2_J(x')-\Theta^2_J(x))}\\
&=\abs{\Theta^2_J(x')-\Theta^2_J(x)}\\
&\leq \abs{\mathcal{X}_{J^{-+}}(x')-\mathcal{X}_{J^{-+}}(x)}+\abs{\mathcal{X}_{J^{+-}}(x')-\mathcal{X}_{J^{+-}}(x)}+\\
&\phantom{\leq\mathcal{X}_{J^{-+}}} 
+\abs{\mathcal{X}_{J^{--}}(x')-\mathcal{X}_{J^{--}}(x)} +\abs{\mathcal{X}_{J^{++}}(x')-\mathcal{X}_{J^{++}}(x)}\\
&\leq 4\frac{4}{2\abs{J}}\delta(x,x').
\end{align*}
Since the series defining $\Omega$ is absolutely convergent, from the above remarks, we have
\begin{align*}
I &\leq \frac{1}{\delta(x,y)}\sum_{m=0}^{\infty}2^{-m}\abs{\Omega_{I^{(m)}(x',y)}(x',y)-\Omega_{I^{(m)}(x,y)}(x,y)}\\
&=\frac{1}{\delta(x,y)}\sum_{m=0}^{\infty}2^{-m}\abs{\Omega_{I^{(m)}(x,y)}(x',y)-\Omega_{I^{(m)}(x,y)}(x,y)}\\
&\leq\frac{8}{\delta(x,y)}\sum_{m=0}^{\infty}2^{-m}\frac{\delta(x,x')}{\abs{I^{(m)}(x,y)}}\\
&=16\frac{\delta(x,x')}{\delta^2(x,y)}.
\end{align*}
Let us estimate \textit{II}. Since $\abs{\Omega}$ is bounded above by $2$ and $\delta$ is a metric on $\mathbb{R}^+$, we have
\begin{equation*}
II\leq 2\frac{\abs{\delta(x,y)-\delta(x',y)}}{\delta(x,y)\delta(x',y)}\leq 2\frac{\delta(x,x')}{\delta(x,y)\delta(x',y)}
\end{equation*}
as we already observed, under the current conditions, $\delta(x',y)=\delta(x,y)$. And we get the desired type estimate
$II\leq 2\frac{\delta(x,x')}{\delta^(x,y)}$. Hence $\abs{P(x',y)-P(x,y)}\leq\sqrt{2}\frac{14}{3}\frac{\delta(x,x')}{\delta^2(x,y)}$
when $\delta(x,x')\leq \tfrac{1}{2}\delta(x,y)$.

The analogous procedure, using (b) and a similar geometric consideration for $x$, $y$, $y'$ with
$\delta(y,y')\leq\frac{1}{2}\delta(x,y)$ gives
$$\abs{P(x,y')-P(x,y)}\leq\sqrt{2}12\frac{\delta(y,y')}{\delta^2(x,y)}.$$
\end{proof}

The next result contains some additional properties of $P$ that shall be used in the next section  in order to get weighted inequalities for the maximal operator of the truncations of $\mathcal{P}$.

As usual, for Calder\'{o}n-Zygmund operators, the truncations of the kernel and the associated maximal operator play a central role in the analysis of the boundedness properties of the operator. For $0<\varepsilon<R<\infty$ set
\begin{equation*}
P_{\varepsilon,R}(x,y)=\mathcal{X}_{\{\varepsilon\leq\delta(x,y)<R\}}P(x,y)=\mathcal{X}_{\{\varepsilon\leq\delta(x,y)<R\}}
\frac{\Omega(x,y)}{\delta(x,y)}.
\end{equation*}
Sometimes, for example when $P$ acts on $L^p(\mathbb{R}^+,dx)$ with $p>1$, only the local truncation about the diagonal is actually needed. For $\varepsilon>0$, $P_{\varepsilon,\infty}(x,y)=\mathcal{X}_{\{\delta(x,y)\geq\varepsilon\}}(x,y)P(x,y)$. Since the original form of Petermichl's kernel is provided in terms of the Haar--Fourier analysis, a scale truncation is still possible and natural. For $l<m$ both in $\mathbb{Z}$ we consider also the scale truncation of $P$ between $2^l$ and $2^m$. In other words,
\begin{equation*}
P^{l,m}(x,y)=\sum_{\{I\in\mathcal{D}^+:2^l\leq\abs{I}<2^m\}}h_I(y)[h_{I^-}(x)-h_{I^+}(x)].
\end{equation*}
Since $\delta$ takes only dyadic values, $P_{\varepsilon,R}$ can also be written as $P_{2^\lambda,2^\mu}$ for $\lambda$ and $\mu\in \mathbb{Z}$. For simplicity we shall write $P_{\lambda,\mu}$ to denote $P_{2^\lambda,2^\mu}$. Hence in our notation the distinction between the two truncations is only positional: $P^{l,m}$ is scale truncation; $P_{l,m}$ is metric truncation. Let us compare these two kernels and the operators induced by them. The calligraphic versions  $\mathcal{P}^{l,m}$ and $\mathcal{P}_{l,m}$ denote the operators induced by $P^{l,m}$ and $P_{l,m}$ respectively.

In the next statement we use two notations for the ancestrality of a dyadic interval. Given $I\in\mathcal{D}^+$, $I^{(n)}$ denotes, as before, the $n$-th ancestor of $I$. Instead $\widehat{I}^j$ denotes the only, if any, ancestor of $I$ in the level $\mathcal{D}^j$ of the dyadic interval. For instance if $I=[\tfrac{3}{2},2)$, then $I^{(1)}=[1,2)$, $I^{(2)}=[0,2)$, $\widehat{I}^0=[1,2)$, $\widehat{I}^3=[0,8)$.

\begin{lemma}\label{lem:PlmQlm}
Let $l$ and $m$ in $\mathbb{Z}$ with $l<m$. Then
\begin{enumerate}
\item $P^{l,m}(x,y)=P_{l,m}(x,y)+Q_{l,m}(x,y)$, where
\begin{equation*}
Q_{l,m}(x,y)=
\left\{
  \begin{array}{ll}
    0, & \hbox{for $\delta(x,y)\geq 2^m$;} \\
    \sqrt{2}\sum\limits_{j=l}^{m-1}2^{-j}\Omega_{\widehat{I}^j(x,y)}(x,y), & \hbox{for $0<\delta(x,y)<2^l$;} \\
    -\frac{\sqrt{2}}{\delta(x,y)}\sum\limits_{n=\log_2\tfrac{2}{\delta(x,y)}}^\infty 2^{-n}\Omega_{I^{(n)}(x,y)}(x,y), & \hbox{when $2^l\leq\delta(x,y)<2^m$.}
  \end{array}
\right.
\end{equation*}
\item $P^{l,m}$ belongs to $L^1(\mathbb{R}^+,dx)$ in each variable when the other variable remains fixed. Moreover
\begin{equation*}
\int_{y\in\mathbb{R}^+}P^{l,m}(x,y)dx=\int_{y\in\mathbb{R}^+}P^{l,m}(x,y)dy=0.
\end{equation*}
\item $\abs{Q_{l,m}(x,y)}\leq 2\sqrt{2}\left(2^{-l}\mathcal{X}_{\{\delta(x,y)<2^l\}}(x,y)+2^{-m}\mathcal{X}_{\{\delta(x,y)<2^m\}}\right)$.

\item The inequality $\abs{\int_{y\in \mathbb{R}^+}Q_{l,m}(x,y)dy}\leq 2\sqrt{2}$ holds for every $l, m$ in $\mathbb{Z}$
and every $x\in \mathbb{R}^+$.

\item The sequence $\int_{y\in \mathbb{R}^+}Q_{l,0}(x,y)dy$ converges uniformly in $x\in \mathbb{R}^+$ for $l$ tends to $-\infty$.
\end{enumerate}
\end{lemma}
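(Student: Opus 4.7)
The plan is to build everything on the series representation
\[
P(x,y) = \sqrt{2}\sum_{I\in\mathcal{D}^+,\, I\supseteq I(x,y)} |I|^{-1}\Omega_I(x,y),
\]
from Theorem~\ref{thm:PetermichlOperatorisCZ}. The scale truncation $P^{l,m}$ is obtained by restricting the sum to $\{I:2^l\le|I|<2^m\}$, while the metric truncation $P_{l,m}$ retains the whole series and multiplies by $\chi_{\{2^l\le\delta(x,y)<2^m\}}$. For part (1), split cases on the position of $\delta(x,y)=|I(x,y)|$ relative to $[2^l,2^m)$: if $\delta\ge 2^m$ no ancestor of $I(x,y)$ is in the permitted range, so both truncations vanish and $Q=0$; if $\delta<2^l$ then $P_{l,m}=0$ and only the ancestors $\widehat{I}^j(x,y)$ with $j\in\{l,\dots,m-1\}$ contribute to $P^{l,m}$; if $2^l\le\delta<2^m$ then $P_{l,m}=P$ while $P^{l,m}$ is the partial sum of the expansion of $P$, so $Q_{l,m}$ is minus the tail.

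Parts (2)--(4) follow as short corollaries. For (2), for fixed $x$ the sum defining $P^{l,m}(x,\cdot)$ collapses to the finitely many $I\ni x$ with $|I|\in[2^l,2^m)$; each contribution is a scalar multiple of the Haar function $h_I(y)$, which has zero mean and compact support, giving both integrability and $\int P^{l,m}(x,y)\,dy=0$. The argument in the other variable is symmetric, using that $h_{I^-}-h_{I^+}$ has zero mean. For (3), apply $|\Omega_J|\le 1$ to the formula from (1): the sum $\sqrt{2}\sum_{j=l}^{m-1}2^{-j}$ gives the bound $2\sqrt{2}\cdot 2^{-l}$ where $\delta<2^l$, and the geometric tail $(\sqrt{2}/\delta)\sum_{n\ge m-\log_2\delta}2^{-n}$ sums to $2\sqrt{2}\cdot 2^{-m}$ where $2^l\le\delta<2^m$. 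For (4), combine (2) and (3): since $\int P^{l,m}(x,y)\,dy=0$, the integral $\int Q_{l,m}(x,y)\,dy$ equals $-\int P_{l,m}(x,y)\,dy$, so
\[
\Bigl|\int Q_{l,m}(x,y)\,dy\Bigr|\le \int|Q_{l,m}(x,y)|\,dy\le 2\sqrt{2}\bigl(2^{-l}\cdot 2^{l-1}+2^{-m}\cdot 2^{m-1}\bigr)=2\sqrt{2},
\]
using that $|\{y:\delta(x,y)<2^j\}|=2^{j-1}$.

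The main obstacle is (5). The pointwise bound from (3) is of order one and does not decay as $l\to-\infty$, and the telescoping $\int Q_{l_1,0}\,dy-\int Q_{l_2,0}\,dy=\int Q_{l_1,l_2}\,dy$ combined with (4) gives only a uniform bound, not a decaying one; so uniform convergence must come from actual cancellation inside the integral, not from an $L^1$ estimate on $Q$. The plan is to write $\int Q_{l,0}(x,y)\,dy=-\int_{\{2^l\le\delta(x,y)<1\}}P(x,y)\,dy$, expand $P$ through its series, interchange sum and integral, and use the factorization $\Omega_J(x,y)=\Theta^1_J(y)\,\Theta^2_J(x)$. For each dyadic ancestor $\widehat{I}^k(x)$ of $x$, the function $\Theta^1_{\widehat{I}^k(x)}(y)$ is constant $\pm 1$ on each dyadic half of $\widehat{I}^k(x)$, so the integral of $\Theta^1_{\widehat{I}^k(x)}(y)$ over $\widehat{I}^{-1}(x)\setminus\widehat{I}^{l-1}(x)$ is a signed sum of measures of dyadic sub-intervals. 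Regrouping over $k$ should yield $\int Q_{l,0}(x,y)\,dy=F(x)+R_l(x)$, where $F(x)$ is an absolutely convergent series built from the sign pattern $\{\Theta^1_{\widehat{I}^k(x)}(x)\Theta^2_{\widehat{I}^k(x)}(x)\}_{k\ge 0}$ and is independent of $l$, while $R_l(x)$ picks up only the boundary contribution from the shrinking inner shell $\widehat{I}^{l-1}(x)$ and is $O(2^l)$ uniformly in $x$. Organizing the dyadic bookkeeping so that this cancellation between the contributions of different $k$ is manifest is the main technical difficulty; once it is set up, uniform convergence at rate $O(2^l)$ follows.
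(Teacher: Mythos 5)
Parts (1)--(4) of your proposal are correct and coincide with the paper's own argument: the three-band case analysis for (1), the finite-Haar-combination observation for (2), the bound $\abs{\Omega_J}\le 1$ plus geometric sums for (3), and integration of the pointwise bound of (3) using $\abs{\set{y:\delta(x,y)<2^j}}=2^{j-1}$ for (4).

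The gap is in (5), exactly at the step you defer. Your plan needs the inner-shell contribution $R_l(x)$ to be $O(2^l)$, and it is not: it is $O(1)$ and, worse, it oscillates. On the shell $B_\delta(x,2^l)$, which is the dyadic interval of length $2^{l-1}$ containing $x$, each function $\Theta^1_{\widehat I^{\,j}(x,y)}(\cdot)$ with $j\ge l$ is constant, equal to its value at $x$, so
\begin{equation*}
\int_{B_\delta(x,2^l)}Q_{l,0}(x,y)\,dy=\sqrt2\,2^{l-1}\sum_{j=l}^{-1}2^{-j}\varepsilon_j(x)=\frac{\sqrt2}{2}\sum_{i=0}^{-l-1}2^{-i}\varepsilon_{l+i}(x),
\end{equation*}
where $\varepsilon_j(x)=\Theta^1_{\widehat I^{\,j}(x)}(x)\,\Theta^2_{\widehat I^{\,j}(x)}(x)\in\set{\pm1}$. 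The factor $2^{l-1}$ from the measure of the shell is cancelled by the factor $2^{-l}$ from the leading term of the kernel, so the $i=0$ term alone is $\pm\sqrt2/2$, not $O(2^l)$. Moreover $\varepsilon_j(x)$ records which quarter of $\widehat I^{\,j}(x)$ contains $x$ and need not stabilize as $j\to-\infty$: for $x=1/3$ one has $\varepsilon_j(1/3)=(-1)^j$ for $j\le 0$, and the sum above tends to $\pm\tfrac{\sqrt2}{3}$ along even and odd $l$ separately. Since the complementary region $\set{2^l\le\delta(x,y)<1}$ contributes a uniformly convergent part, no regrouping can rescue the argument; indeed a direct computation from the Haar definition gives $\int_{\mathbb{R}^+}Q_{-1,0}(\tfrac13,y)\,dy=\tfrac{\sqrt2}{2}$, $\int Q_{-2,0}(\tfrac13,y)\,dy=-\tfrac{\sqrt2}{4}$, $\int Q_{-3,0}(\tfrac13,y)\,dy=\tfrac{3\sqrt2}{8}$, which is not a Cauchy sequence. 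So statement (5) fails as written, and the paper's own proof shares the weak point: its first term $\sqrt2\sum_{i=0}^{-l-1}2^{-i}\widehat\sigma_{l,i+l}(x)$ is only \emph{dominated by} a geometric series, which yields uniform boundedness (your telescoping remark plus (4) already gives that) but not convergence as $l\to-\infty$.
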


\begin{proof}
Let us rewrite together the two truncations of $P$ for the same values of $l$ and $m$ with $l<m$,
\begin{eqnarray*}
   P^{l,m}(x,y)&=& \sum_{I\in\mathcal{D}^+, 2^l\leq\abs{I}<2^m}h_I(y)[h_{I^{-}}(x)-h_{I^{+}}(x)]; \\
   P_{l,m}(x,y)&=& \mathcal{X}_{\{2^l\leq\delta(x,y)<2^m\}}(x,y)\frac{\Omega(x,y)}{\delta(x,y)}
\end{eqnarray*}
with $\Omega(x,y)=\sqrt{2}\sum_{n=0}^{\infty}2^{-n}\Omega_{I^{(n)}(x,y)} (x,y)$. Let us compute
$P^{l,m}(x,y)$ for the  three bands around the diagonal $\Delta$ of $\mathbb{R}^+\times \mathbb{R}^+$ determined by $2^l$
and $2^m$. First, assume that $0<\delta(x,y)<2^l$. Then
\begin{equation*}
P^{l,m}(x,y)=\sqrt{2}\sum_{\substack{I\in\mathcal{D}^+\\ 2^l\leq\abs{I}<2^m}}\frac{1}{\abs{I}}\Omega_I(x,y).
\end{equation*}
Since $\supp\Omega_I\subset I\times I$, once $(x,y)$ is given, with $\delta(x,y)<2^l$, the sum above is performed
only on those dyadic intervals $I$ for which $2^l\leq\abs{I}<2^m$ that contain $I(x,y)$; the smallest dyadic interval containing
both $x$ and $y$. Hence
\begin{equation*}
P^{l,m}(x,y) =\sqrt{2}\sum_{j=l}^{m-1}\frac{1}{2^j}\Omega_{\widehat{I}^j(x,y)}(x,y)
=Q_{l,m}(x,y)\\
=Q_{l,m}(x,y)+P_{l,m}(x,y)
\end{equation*}
in the $\delta$-strip $\{(x,y):\mathbb{R}^+\times \mathbb{R}^+:\delta(x,y)<2^l\}$. Second, assume that $\delta(x,y)\geq 2^m$. Then
no dyadic interval $I$ containing both $x$ and $y$ has a measure less than $2^m$. So that $P^{l,m}$ vanishes when $\delta(x,y)\geq 2^m$
and again $P^{l,m}=Q_{l,m}+P_{l,m}$. The third and last case to be considered is when $2^l\leq\delta(x,y)<2^m$. Again the non-vanishing
condition for $\Omega_I(x,y)$ requires $I\supseteq I(x,y)$, hence
\begin{equation*}
P^{l,m}(x,y)=\sqrt{2}\sum_{\substack{I\in\mathcal{D}\\ \abs{I}<2^m\\ I\supseteq I(x,y)}}\frac{1}{\abs{I}}\Omega_I(x,y).
\end{equation*}
Since $I\supseteq I(x,y)$ then, in the above sum, $I$ has to be an ancestor of $I(x,y)$. Hence $\abs{I}=2^n\abs{I(x,y)}=2^n\delta(x,y)$
for some $n=0,1,2,\ldots$ The upper restriction on the measure of $I$, $\abs{I}<2^m$, provides an upper bound for $n$. In fact, since
$2^m>\abs{I}=2^n\delta(x,y)$, $n\leq(\log_2 2^m\delta^{-1}(x,y))-1$. Notice that $2^m\delta^{-1}(x,y)$ is an integral power of $2$, so that
$\log_2 2^m\delta^{-1}(x,y)\in \mathbb{Z}$. Hence
\begin{align*}
P^{l,m}&=\frac{\sqrt{2}}{\delta(x,y)}\sum_{n=0}^{\log_2\tfrac{2^m}{\delta(x,y)}-1}\frac{1}{2^n}\Omega_{I^{(n)}(x,y)}(x,y)\\
&=\frac{\sqrt{2}}{\delta(x,y)}\Biggl(\Omega(x,y)-\sum_{n=\log_2\tfrac{2^m}{\delta(x,y)}}^{\infty}\frac{1}{2^n}\Omega_{I^{(n)}(x,y)}(x,y)\Biggr)\\
&=P_{l,m}(x,y)+Q_{l,m}(x,y),
\end{align*}
and (1) is proved.

In order to prove (2), notice that for $x$ fixed $P^{l,m}(x,\cdot)$ is a finite linear combination of Haar functions in the variable $y$.
Hence $P^{l,m}(x,\cdot)$ is an $L^1(\mathbb{R}^+,dx)$ function and its integral in $y$ vanishes, since each Haar function has mean value zero.
An analogous argument hold for $y$ fixed and $P^{l,m}(\cdot,y)$.

Let us get the bound in (3). We only have to check it in the bands $\{\delta(x,y)<2^l\}$ and $\{2^l\leq\delta(x,y)<2^m\}$. Let us first
take $\delta(x,y)<2^l$. Then
\begin{equation*}
\abs{Q_{l,m}(x,y)}=\sqrt{2}\abs{\sum_{j=l}^{m-1}2^{-j}\Omega_{\widehat{I}^j(x,y)}(x,y)}\leq \sqrt{2}\sum_{j=l}^m2^{-j}\leq 2\sqrt{2}2^{-l},
\end{equation*}
as desired. Assume now that $2^l\leq\delta(x,y) <2^m$. Then
\begin{equation*}
\abs{Q_{l,m}(x,y)}\leq\sqrt{2}\frac{1}{\delta(x,y)}\sum_{n=\log_2\tfrac{2^m}{\delta(x,y)}}^{\infty}2^{-n}=2\sqrt{2}\frac{1}{\delta(x,y)}
\frac{\delta(x,y)}{2^m}=2\sqrt{2}2^{-m}.
\end{equation*}

For the proof of (4) notice that from (3) we have that, for fixed $x$ and fixed $l$ and $m$, as a function of $y$, $Q_{l,m}(x,y)$, and hence
 $P_{l,m}(x,y)$, is integrable. Then
\begin{equation*}
\Biggl|\int\limits_{y\in \mathbb{R}^+}Q_{l,m}(x,y)dy\Biggr|
\leq 2\sqrt{2}\int\limits_{y\in \mathbb{R}^+}\left\{2^{-l}\mathcal{X}_{\set{\delta(x,y)<2^l}}(x,y)
+2^{-m}\mathcal{X}_{\set{\delta(x,y)<2^m}}(x,y)\right\}dy
=2\sqrt{2}.
\end{equation*}

Let us prove (5). From the expression in (1) for $Q_{l,0}$, we have
\begin{align*}
&\int\limits_{y\in \mathbb{R}^+}Q_{l,0}(x,y)dy =\sqrt{2}\int\limits_{B_\delta(x,2^l)}\Biggl(\sum_{j=l}^{-1}2^{-j}
\Omega_{\widehat{I}^j(x,y)}(x,y)\Biggr)dy+\\
&\phantom{\int\limits_{y\in \mathbb{R}^+}Q_{l,0}(x,y)dy =}-\sqrt{2}\int\limits_{B_\delta(x,1)\setminus B_\delta(x,2^l)}\frac{1}{\delta(x,y)}
\Biggl(\sum_{n=\log_2\tfrac{1}{\delta(x,y)}}^{\infty}\frac{1}{2^n}\Omega_{I^{(n)}(x,y)}(x,y)\Biggr)dy\\
&=\sqrt{2}\sum_{j=l}^{-1}2^{-j}
\int\limits_{B_\delta(x,2^l)}\Omega_{\widehat{I}^j(x,y)}(x,y)dy
-\sqrt{2}\sum_{i=l}^{-1}2^{-i}\int\limits_{\{y:\delta(x,y)=2^i\}}
\Biggl(\sum_{n=-i}^{\infty}\frac{1}{2^n}\Omega_{I^{(n)}(x,y)}(x,y)\Biggr)dy\\
&=\sqrt{2}\, \Biggl(\sum_{j=l}^{-1}2^{-j}2^l\widehat{\sigma}_{l,j}(x)-\tfrac{1}{2}\sum_{i=l}^{-1}
2^{-i}\sum_{n=-i}^{\infty}2^{-n}2^i\sigma_{n,i}(x)\Biggr),
\end{align*}
where $\widehat{\sigma}_{l,j}(x)=\fint_{B_\delta(x,2^l)}\Omega_{\widehat{I}^j(x,y)}(x,y) dy$ and
$\sigma_{n,i}(x)=\fint_{\{\delta(x,y)=2^i\}}\Omega_{I^{(n)}(x,y)}(x,y) dy$ and $\fint_E f$ denotes the mean value of $f$ on $E$. So that
\begin{equation*}
\int\limits_{y\in \mathbb{R}^+}Q_{l,0}(x,y)dy
= \sqrt{2}\sum_{i=0}^{-l-1}2^{-i}\widehat{\sigma}_{l,i+l}(x)
-\frac{\sqrt{2}}{2}\Biggl(\sum_{n=1}^{-l}2^{-n}\sum_{i=-n}^{-1}\sigma_{n,i}(x)+\sum_{n=-l+1}^{\infty}2^{-n}\sum_{i=l}^{-1}\sigma_{n,i}(x)\Biggr).
\end{equation*}
Since in the definitions of $\widehat{\sigma}$ and $\sigma$ we are taking mean values of functions with $L^{\infty}$-norm equal to $1$, we certainly
have that $\abs{\widehat{\sigma}}\leq 1$ and $\abs{\sigma}\leq 1$. Hence $\abs{\sum_{i=-n}^{-1}\sigma_{n,i}(x)}\leq n$, and
$\abs{\sum_{i=l}^{-1}\sigma_{n,i}(x)}\leq\abs{l}=-l$. So the first term in the expression for the integral is dominated by the geometric series
$\sum_{i\geq 0}2^{-i}$, the second term is dominated by the convergent series $\sum_{n=1}^{\infty}n2^{-n}$ and the third term is bounded by
$\abs{l}\sum_{n=-l+1}^{\infty}2^{-n}$ which tends to zero as $\abs{l}$ tends to infinity.
\end{proof}
Let us notice that (4) and (5) in the above lemma hold also integrating in the variable $x$.

One more remark is in order; $P$ is dyadicaly homogeneous of degree $-1$ and $\Omega$ of degree zero. In  other words
$P(2^jx,2^jy)=2^{-j}P(x,y)$ and $\Omega(2^jx,2^jy)=\Omega(x,y)$.

From the above lemma, we conclude that with
\begin{eqnarray*}
  \mathcal{P}^* f(x) &=& \sup_{\substack{l<m\\ l,m\in \mathbb{Z}}}\abs{\int_{\mathbb{R}^+}P^{l,m}(x,y)f(y) dy} \textrm{,\, and } \\
  \mathcal{P}_* f(x) &=& \sup_{\substack{l<m\\ l,m\in \mathbb{Z}}}\abs{\mathcal{P}_{l,m}(x,y)}
\end{eqnarray*}
we have
\begin{eqnarray}\label{eq:maximalOperatorsrelationMaximalHL}
\mathcal{P}_* f(x) &\leq& 4\sqrt{2} M_{dy} f(x)+\mathcal{P}^*f(x) \textrm{,\, and }\notag\\
\mathcal{P}^* f(x) &\leq& 4\sqrt{2} M_{dy} f(x)+\mathcal{P}_*f(x),
\end{eqnarray}
where
\begin{equation*}
M_{dy}f(x)=\sup_{x\in I\in\mathcal{D}^+}\frac{1}{\abs{I}}\int_I\abs{f(y)}dy
\end{equation*}
the dyadic maximal operator.

\section{Weighted norm inequalities for the Petermichl's operator}\label{sec:weights}

We shall see in this section that $\mathcal{P}$ satisfies all the conditions in \cite{Aimar} in order to show the $L^p(\mathbb{R}^+,wdx)$
boundedness for $w\in A_p(\mathbb{R}^+,\delta,dx)$ which coincides with the dyadic Muckenhoupt weights in $\mathbb{R}^+$. For
the sake of completeness we proceed to provide the statement of the main result in \cite{Aimar} on normal spaces of homogeneous type for
general Calder\'{o}n-Zygmund operators.

Let $X$ be a set. A quasi-distance on $X$ is a nonnegative and symmetric function $d$ on $X\times X$, vanishing only on the diagonal
of $X\times X$ such that for some $\kappa>0$ the inequality $d(x,z)\leq\kappa(d(x,y)+d(y,z))$ holds for every $x$, $y$ and $z\in X$. The
main results on the structure of quasi-metric spaces are contained in \cite{MaSe79Lip}. The Borel sets in $X$ are those in the $\sigma$-algebra
generated by the topology induced in $X$ by the neighborhoods defined by the $d$-balls. If the $d$-balls are Borel sets and $\mu$ is a positive
Borel measure such that for some constant $A$ the inequalities
\begin{equation*}
0<\mu(B(x,2r))\leq A\mu(B(x,r))<\infty
\end{equation*}
hold for every $x\in X$ and every $r>0$, where $B(x,r)=\set{y\in X:d(x,y)<r}$, we say the $(X,d,\mu)$ is a space of homogeneous type.

Let $(X,d,\mu)$ be a space of homogeneous type such that continuous functions are dense in $L^1(X,\mu)$. Let $1<p<\infty$, a nonnegative
and locally integrable function $w$ defined on $X$ is said to satisfy the Muckenhoupt $A_p$ condition, or $w\in A_p(X,d,\mu)$, if there exists a
constant $C$ such that
\begin{equation*}
\left(\fint_B wd\mu\right)\left(\fint_Bw^{-\tfrac{1}{p-1}}d\mu\right)^{p-1}\leq C
\end{equation*}
for every $d$-ball $B$. As before, $\fint_E wd\mu=\mu(E)^{-1}\int_Ew(x)d\mu(x)$. A weight $w$ is said to belong to $A_\infty$ if there exist two
constants $C$ and $\eta>0$ such that the inequality
\begin{equation*}
\frac{w(E)}{w(B)}\leq C\left(\frac{\mu(E)}{\mu(B)}\right)^{\eta}
\end{equation*}
holds for every ball $B$ and every measurable subset $E$ of $B$. The Hardy-Littlewood maximal function in this setting is, naturally, given by
\begin{equation*}
Mf(x)=\sup_{x\in B}\frac{1}{\mu(B)}\int_B\abs{f}d\mu.
\end{equation*}
The results in \cite{MaSe81} show the reverse H\"{o}lder inequality for $A_p$ weights and, as a consequence, the boundedness of the Hardy--Littlewood maximal
in $L^p(X,w d\mu)$ when $w\in A_p$.
\begin{theorem}[\cite{MaSe81}, \cite{AiMa84}]\label{thm:MaximalconditionApETH}
Let $(X,d,\mu)$ be a space of homogeneous type and $1<p<\infty$. Then $w\in A_p$ if and only if for some constant $C$ we have
\begin{equation*}
\int_X(Mf(x))^pw(x)dx\leq C\int_X\abs{f(x)}^pw(x)d\mu(x)
\end{equation*}
for every measurable function $f$.
\end{theorem}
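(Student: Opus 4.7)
The plan is to prove both implications separately, following the classical Muckenhoupt strategy adapted to spaces of homogeneous type via a Vitali-type covering, H\"older's inequality, and the reverse H\"older inequality for $A_p$ weights.

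\textbf{Necessity.} This direction is a direct testing argument. Fix a $d$-ball $B$ and set $\sigma=w^{-1/(p-1)}$. Applying the hypothesis to $f=\sigma\,\mathcal{X}_B$ and using $Mf(x)\geq \mu(B)^{-1}\int_B\sigma\,d\mu$ for every $x\in B$, the inequality $\int_X(Mf)^pw\,d\mu\leq C\int_X\abs{f}^pw\,d\mu$ reduces, after dividing by $\mu(B)^p$ and simplifying via $\sigma^pw=\sigma$, to exactly the $A_p$ condition on $B$.

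\textbf{Sufficiency: weak type.} Given $\lambda>0$, every $x$ in $\{Mf>\lambda\}$ carries a ball $B_x$ with $\mu(B_x)^{-1}\int_{B_x}\abs{f}\,d\mu>\lambda$. A Vitali-type selection (valid in any space of homogeneous type, by the geometric results of \cite{MaSe79Lip}) extracts a disjoint subfamily $\{B_i\}$ whose controlled dilates still cover the level set. H\"older's inequality gives
\begin{equation*}
\lambda\mu(B_i)<\int_{B_i}\abs{f}\,d\mu\leq\Bigl(\int_{B_i}\abs{f}^pw\,d\mu\Bigr)^{1/p}\Bigl(\int_{B_i}\sigma\,d\mu\Bigr)^{1/p'},
\end{equation*}
and using the $A_p$ condition to replace $\bigl(\int_{B_i}\sigma\,d\mu\bigr)^{p-1}$ by a constant multiple of $\mu(B_i)^p/w(B_i)$ yields $\lambda^pw(B_i)\leq C\int_{B_i}\abs{f}^pw\,d\mu$. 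Summing, together with disjointness of the $B_i$ and the doubling of $w$ (itself a consequence of the $A_p$ hypothesis), delivers the weak-type $(p,p)$ estimate for $M$ on $L^p(w\,d\mu)$.

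\textbf{From weak to strong, and the main obstacle.} To upgrade weak-type to strong-type I invoke the reverse H\"older inequality for $A_p$ weights in spaces of homogeneous type, which yields the self-improvement $w\in A_{p-\varepsilon}$ for some $\varepsilon>0$. Running the previous weak-type argument at exponent $p-\varepsilon$ and interpolating the resulting weak $(p-\varepsilon,p-\varepsilon)$ bound with the trivial pointwise estimate $Mf\leq\norm{f}_\infty$ (which gives strong $(\infty,\infty)$ on $L^\infty(w\,d\mu)$) via the Marcinkiewicz theorem produces the strong-type $(p,p)$ inequality. The reverse H\"older step is the essential obstacle: it requires a Calder\'on-Zygmund decomposition compatible with the underlying quasi-metric, which in this generality is supplied by Christ-type dyadic cubes (or the original Mac\'ias-Segovia construction) combined with the doubling property of $\mu$; once that decomposition is in hand, the covering, H\"older, and interpolation steps above follow the classical Euclidean template essentially verbatim.
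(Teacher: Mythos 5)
The paper does not actually prove this theorem: it is quoted from \cite{MaSe81} and \cite{AiMa84}, and the sentence preceding its statement indicates exactly the route you take (reverse H\"older for $A_p$ weights on spaces of homogeneous type, then the classical Muckenhoupt covering--H\"older--interpolation argument), so your proposal matches the intended proof in approach and is correct in outline. The only loose end is the standard one in the necessity step: you should test with $\min(\sigma,n)\mathcal{X}_B$ rather than $\sigma\mathcal{X}_B$ so as not to presuppose $\int_B\sigma\,d\mu<\infty$ before dividing.
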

For singular integrals, the detection of the correct integral singularity of the space is attained
after normalization of the space $(X,d,\mu)$ (\cite{MaSe79Lip}). We shall assume here that $(X,d,\mu)$ is a normal space in the sense that there
exist two constants $0<\alpha\leq \beta<\infty$ such that $\alpha r\leq\mu(B(x,r))\leq\beta r$. Let us only recall two particular instances of this situation. The
first, $X=\mathbb{R}^n$, $d(x,y)=\abs{x-y}^n$ and $\mu$ Lebesgue measure. The second, $X=\mathbb{R}^+$, $d(x,y)=\delta(x,y)=\abs{I(x,y)}$, where
$I(x,y)$ is the smallest dyadic interval containing $x$ and $y$. In this case $\mu$ is one dimensional Lebesgue measure.

The next statement collects the boundedness results for singular integrals in \cite{Aimar}.
\begin{theorem}[\cite{Aimar}]\label{thm:thesisAimar}
Let $(X,d,\mu)$ be a normal space such that continuous functions are dense in $L^1$. Assume that for every $r>0$ and every $x_0\in X$ we have that $\mu(B(x,r)\vartriangle B(x_0,r))\to 0$ when
$d(x,x_0)\to 0$, where $E\triangle F$ denotes the symmetric difference of $E$ and $F$. Let $T$ be a Calder\'{o}n-Zygmund operator on
$(X,d,\mu)$ in the sense of Definition~\ref{def:CalderonZygmundOperator} in \S\ref{sec:PetermichlOperatorAsCZ}. Let $K(x,y)$ be the kernel of $T$. Assume that the kernel $K$ satisfies also,
\begin{itemize}
\item[(iii)] for every $R>r>0$, we have
\begin{itemize}
\item[(iii.a)] $\abs{\int_{r\leq d(x,y)<R}K(x,y)d\mu(y)}$ is bounded uniformly in $r$, $R$ and $x$. 

Moreover, $\int_{r\leq d(x,y)<1}K(x,y)d\mu(y)$
converges uniformly in $x$ when $r$ tends to zero.
\item[(iii.b)] $\abs{\int_{r\leq d(x,y)<R}K(x,y)d\mu(x)}$ is bounded uniformly in $r$, $R$ and $y$. 

Moreover, $\int_{r\leq d(x,y)<1}K(x,y)d\mu(x)$
converges uniformly in $y$ when $r$ tends to zero.
\end{itemize}
\end{itemize}
Then, with $T_{R,r}f(x)=\int_{y\in X}K_{R,r}(x,y)f(y)d\mu(y)$, $K_{R,r}=\mathcal{X}_{r\leq d<R}K$ and $T_* f(x)=\sup_{\varepsilon>0}\abs{T_{\infty,\varepsilon}f(x)}$, we have
\begin{enumerate}
\item for $1<p<\infty$ there exists the $L^p(X,\mu)$ limit $Tf$ of $T_{R,r}f$ when $R\to +\infty$ and $r\to 0$;
\item for $f\in L^p(X,\mu)$ and $1<p<\infty$ we have Cotlar's inequality
\begin{equation*}
T_* f(x)\leq C M(Tf(x))+CMf(x);
\end{equation*}
\item the maximal operator $T_*$ is of weak type (1,1). In other words, for some constant $C>0$ we have
\begin{equation*}
\mu\left(\set{T_* f>\lambda}\right)\leq\frac{C}{\lambda}\norm{f}_{L^1};
\end{equation*}
\item for $w\in A_{\infty}(X,\mu)$
\begin{equation*}
\int_X [T_*f(x)]^pw(x)d\mu(x)\leq C\int_X[Mf(x)]^pw(x)d\mu(x);
\end{equation*}
\item for $w\in A_p(X,\mu)$ we have
\begin{equation*}
\int_X [T_*f(x)]^p w(x)d\mu(x)\leq C\int_X\abs{f(x)}^p w(x)d\mu(x).
\end{equation*}
\end{enumerate}
\end{theorem}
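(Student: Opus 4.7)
The plan is to follow the classical Calderón--Zygmund program adapted to normal spaces of homogeneous type, working through (1)--(5) in order. Most of the machinery is already encoded in the hypotheses: $L^2$-boundedness comes from (iii) of Definition~\ref{def:CalderonZygmundOperator}, size and regularity from (i)--(ii), and---crucially---the uniform cancellation condition (iii) of the present theorem plays the role that ``$T(1)\in\mathrm{BMO}$'' plays in the Euclidean, doubling-measure version.

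First I would upgrade the $L^2$-bound to weak-type $(1,1)$ for $T$ itself via a Calderón--Zygmund decomposition at height $\lambda$, using the dyadic-type covering available on any space of homogeneous type (Macías--Segovia). The good part is controlled by the $L^2$ bound; for each bad piece $b_j$ supported on a ball $B_j$ with $\int b_j\,d\mu=0$, the regularity (ii) yields $\int_{(cB_j)^c}\abs{Tb_j}\,d\mu\leq C\norm{b_j}_1$ after subtracting $K(x,c_j)$ inside the integral, with the normalization $\mu(B(x,r))\sim r$ keeping the bookkeeping clean. Interpolation with $L^2$ gives strong $L^p$ for $1<p\leq 2$, and duality (using the symmetric regularity and the symmetric version of (iii)) gives $p>2$. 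Claim (1) then follows from the splitting $T-T_{R,r}=T\mathcal{X}_{\{d<r\}}+T\mathcal{X}_{\{d\geq R\}}$: the large-scale piece vanishes by dominated convergence from the kernel size, while for the small-scale piece one writes $\int_{d(x,y)<r}K(x,y)f(y)\,d\mu(y)=\int_{d(x,y)<r}K(x,y)[f(y)-f(x)]\,d\mu(y)+f(x)\int_{d(x,y)<r}K(x,y)\,d\mu(y)$, controlling the first summand by (ii) together with a telescoping annular sum, and the second by the uniform convergence clause of (iii.a).

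The heart of the argument is Cotlar's inequality (2). Fix $x_0$ and $\varepsilon>0$ and let $B=B(x_0,A\varepsilon)$ for a suitably large $A$. Split $f=f\mathcal{X}_B+f\mathcal{X}_{B^c}$. For $z$ in a sub-ball $B(x_0,c\varepsilon)$, the $f\mathcal{X}_{B^c}$ contribution to $T_{\infty,\varepsilon}f(x_0)-Tf(z)$ is controlled by summing the regularity (ii.a) over dyadic annuli to obtain $C\,Mf(x_0)$, while the $f\mathcal{X}_B$ part of $Tf(z)$ can be made smaller than $\sim Mf(x_0)$ on a set of positive measure by the weak-$(1,1)$ bound already proved. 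Averaging over such $z$ absorbs $Tf(z)$ into $M(Tf)(x_0)$ and yields $T_*f(x_0)\leq C\,M(Tf)(x_0)+C\,Mf(x_0)$. From this, (3) follows by combining the weak-$(1,1)$ bounds on $T$ and on $M$; (4) follows from the standard good-$\lambda$ inequality $\mu\{T_*f>2\lambda,\,Mf\leq\gamma\lambda\}\leq\psi(\gamma)\,\mu\{T_*f>\lambda\}$ with $\psi(\gamma)\to 0$, integrated against the $A_\infty$ weight; and (5) is then immediate from (4) and Theorem~\ref{thm:MaximalconditionApETH}.

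The main obstacle will be Cotlar's inequality itself, which forces a simultaneous handling of the ``in-ball'' contribution (through the weak-$(1,1)$ bound, hence circularly through the Calderón--Zygmund decomposition) and the ``out-of-ball'' contribution (through kernel regularity). In the normal-space setting the geometry is benign since $\mu(B(x,r))\sim r$, but what makes the averaging over $z$ yield a Cotlar constant independent of $\varepsilon$ is the \emph{uniform} convergence in (iii), not merely its boundedness---this is precisely the substitute, in the quasi-metric non-convolution setting, for the hypothesis ``$T(1)=0$'' of the classical theory, and without it $T_*$ need not obey any such pointwise estimate.
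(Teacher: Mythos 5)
The paper offers no proof of this statement: it is quoted, with attribution, from \cite{Aimar}, so there is no internal argument to compare yours against. Your outline is the standard Calder\'on--Zygmund program on normal spaces of homogeneous type (Calder\'on--Zygmund decomposition for the weak type $(1,1)$ of $T$, Cotlar's inequality for $T_*$, the Coifman--Fefferman good-$\lambda$ inequality for the $A_\infty$ estimate, and Theorem~\ref{thm:MaximalconditionApETH} to pass to $A_p$), which is precisely the route of that reference, and as a plan it is sound. Two points deserve correction or completion. First, you misattribute the role of hypothesis (iii): Cotlar's inequality (2) needs only the size and smoothness of $K$ together with the boundedness of $T$ (the annulus $\{\varepsilon\leq d(x_0,y)<A\varepsilon\}$ left over in your splitting is absorbed into $Mf(x_0)$ by the size estimate (i) alone); what (iii) actually buys is claim (1), namely the existence of the principal value, and hence the meaning of ``$Tf$'' appearing in (2). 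Relatedly, in your argument for (1) the term $\int_{d(x,y)<r}K(x,y)[f(y)-f(x)]\,d\mu(y)$ is controlled by the size estimate (i) together with the $\gamma$-Lipschitz continuity of $f$ taken from a dense class (Mac\'{\i}as--Segovia), not by the kernel regularity (ii). Second, your sketch leaves implicit the uniform $L^p$ boundedness of the truncations $T_{R,r}$, which is what lets you pass from the dense class to all of $L^p$ in (1) and makes the good-$\lambda$ argument legitimate; this is supplied by Cotlar's inequality itself, so the logical order must be: weak $(1,1)$ for $T$, then (2), then (1), then (3)--(5).
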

As a consequence of the above result and of the results in Section~\ref{sec:PetermichlOperatorAsCZ}, we
get the weighted boundedness of the maximal operators associated to Petermichl's kernel. We say that $w$ defined
on $\mathbb{R}^+$ is in $A^{dy}_p(\mathbb{R}^+,dx)$ if the inequality  $\left(\fint_I wd\mu\right)\left(\fint_I w^{-1/(p-1)}d\mu\right)^{p-1}\leq C$ holds for every $I\in \mathcal{D}^+$.

\begin{theorem}\label{thm:OperatorPApconditionnecessary}
For $1<p<\infty$ and $w\in A^{dy}_p(\mathbb{R}^+,dx)$ we have that $\mathcal{P}_*$ is bounded in $L^p(\mathbb{R}^+,wdx)$.
\end{theorem}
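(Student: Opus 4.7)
The plan is a direct application of Theorem~\ref{thm:thesisAimar} to the operator $\mathcal{P}$ on the normal metric measure space $(\mathbb{R}^+,\delta,|\cdot|)$. Theorem~\ref{thm:PetermichlOperatorisCZ} already supplies the standing hypotheses: the space is normal, the $\delta$-continuous functions are dense in $L^1(\mathbb{R}^+,dx)$, and $\mathcal{P}$ is a Calderón--Zygmund operator in the sense of Definition~\ref{def:CalderonZygmundOperator}. What must still be checked is the ball-perturbation condition $\mu(B_\delta(x,r)\vartriangle B_\delta(x_0,r))\to 0$ as $\delta(x,x_0)\to 0$, together with the two cancellation conditions (iii.a) and (iii.b) from Theorem~\ref{thm:thesisAimar}.

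The ball-perturbation hypothesis is trivial in the ultrametric setting: as already observed in \S\ref{sec:PetermichlOperatorAsCZ}, $\delta(x,x_0)<r$ forces $B_\delta(x,r)=B_\delta(x_0,r)$, so the symmetric difference has measure zero for all sufficiently close pairs. The cancellation conditions are exactly what Lemma~\ref{lem:PlmQlm} was designed to supply. Because $\delta$ takes only dyadic values, every metric truncation of $P$ between $r=2^l$ and $R=2^m$ equals $P_{l,m}$, and part (1) of the lemma gives $P^{l,m}=P_{l,m}+Q_{l,m}$. By part (2), $\int_{\mathbb{R}^+} P^{l,m}(x,y)\,dy=0$, hence
\[
\int_{\mathbb{R}^+} P_{l,m}(x,y)\,dy \;=\; -\int_{\mathbb{R}^+} Q_{l,m}(x,y)\,dy.
\]
Part (4) bounds the right-hand side by $2\sqrt{2}$ uniformly in $l$, $m$ and $x$, which is the first assertion of (iii.a); part (5), combined with the dyadic-homogeneity identity $P(2^jx,2^jy)=2^{-j}P(x,y)$ (which reduces the general upper cut-off to $R=1$), yields the uniform convergence as $r\to 0$. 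The remark immediately after Lemma~\ref{lem:PlmQlm} states that (4) and (5) also hold when one integrates in the variable $x$, which covers (iii.b).

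With every hypothesis of Theorem~\ref{thm:thesisAimar} verified, items (1) and (5) of that theorem yield the boundedness of $T_*=\sup_{\varepsilon>0}|T_{\infty,\varepsilon}f|$ on $L^p(\mathbb{R}^+,w\,dx)$ for every $w\in A_p(\mathbb{R}^+,\delta,dx)$. The supremum over upper truncations defining $\mathcal{P}_*$ is absorbed by the pointwise bound $|\mathcal{P}_{l,m}f|\le |\mathcal{P}_{l,\infty}f|+|\mathcal{P}_{m,\infty}f|\le 2T_* f$, and the identification $A_p(\mathbb{R}^+,\delta,dx)=A_p^{dy}(\mathbb{R}^+,dx)$ is immediate because the $\delta$-balls are precisely the dyadic intervals in $\mathcal{D}^+$ containing their centers. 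The one step requiring genuine work is the verification of (iii.a)--(iii.b), and that is essentially what Lemma~\ref{lem:PlmQlm} was assembled to provide; the rest of the argument is bookkeeping.
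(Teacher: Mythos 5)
Your proposal is correct and follows essentially the same route as the paper: verify the hypotheses of Theorem~\ref{thm:thesisAimar} on $(\mathbb{R}^+,\delta,dx)$ using Theorem~\ref{thm:PetermichlOperatorisCZ} for (i)--(ii) and parts (2), (4), (5) of Lemma~\ref{lem:PlmQlm} for the cancellation conditions, then identify $A_p(\mathbb{R}^+,\delta,dx)$ with $A_p^{dy}(\mathbb{R}^+,dx)$. You in fact spell out two steps the paper leaves implicit --- the identity $\int P_{l,m}(x,y)\,dy=-\int Q_{l,m}(x,y)\,dy$ and the reduction of the two-sided truncations in $\mathcal{P}_*$ to the one-sided maximal operator $T_*$ --- which is harmless and slightly more careful.
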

\begin{proof}
Let us check that we are in the hypothesis of Theorem~\ref{thm:thesisAimar}. As we already proved $X=\mathbb{R}^+$, $d=\delta$ and $\mu=$Lebesgue measure, provide a normal space in which $\delta$-Lipschitz functions are dense in $L^1(\mathbb{R}^+,dx)$. In order to prove that $\abs{B_{\delta}(x,r)\vartriangle B_{\delta}(x_0,r)}$ tends to zero when $x$ tends to $x_0$ for fixed positive $r$, just notice that when $\delta(x,x_0)<r/2$, $B_\delta(x,r)$ and $B_\delta(x_0,r)$ coincide. From Theorem~\ref{thm:PetermichlOperatorisCZ} we have the kernel $P(x,y)$ satisfies (i) and (ii) in the Definition of Calder\'{o}n--Zygmund operator. On the other hand, since $P^{l,m}=P_{l,m}+Q_{l,m}$ from (2), (4) and (5) in Lemma~\ref{lem:PlmQlm}, we get (iii) in Theorem~\ref{thm:thesisAimar}. Then we can apply Theorem~\ref{thm:thesisAimar} to obtain the boundedness properties of $\mathcal{P}_*$ in particular the weighted boundedness contained in (5). It only remains to observe that $A_p(\mathbb{R}^+,\delta,dx)=A^{dy}_p(\mathbb{\mathbb{R}}^+,dx)$
\end{proof}

\begin{theorem}
Let $1<p<\infty$. Then $\mathcal{P}^*$ is bounded in $L^p(\mathbb{R}^+,wdx)$ if and only if $w\in A^{dy}_p(\mathbb{R}^+,dx)$.
\end{theorem}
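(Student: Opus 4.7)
The theorem splits into two implications. The sufficient direction follows quickly from the machinery already in place. The pointwise comparison (\ref{eq:maximalOperatorsrelationMaximalHL}) gives $\mathcal{P}^*f \leq 4\sqrt{2}\,M_{dy}f + \mathcal{P}_*f$, so for $w\in A_p^{dy}(\mathbb{R}^+,dx)$ the desired bound follows by adding the $L^p(w\,dx)$ bound for $M_{dy}$ (Theorem~\ref{thm:MaximalconditionApETH}, using the identification $A_p(\mathbb{R}^+,\delta,dx) = A_p^{dy}(\mathbb{R}^+,dx)$ noted in the proof of Theorem~\ref{thm:OperatorPApconditionnecessary}) to the $L^p(w\,dx)$ bound for $\mathcal{P}_*$ furnished by Theorem~\ref{thm:OperatorPApconditionnecessary} itself.

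The necessary direction is where the work lies. I would run a testing argument based on the single-scale truncation $\mathcal{P}^{l,l+1}$, which admits a clean closed-form expression. From $h_J(y) = |J|^{-1/2}(\mathcal{X}_{J^-}(y) - \mathcal{X}_{J^+}(y))$ and the analogous expansions of $h_{J^-},h_{J^+}$, one computes that for every $J\in\mathcal{D}^+$ with $|J|=2^l$ and every $x\in J$,
\[
\mathcal{P}^{l,l+1}f(x) = \frac{\sqrt{2}}{|J|}\Bigl(\int_{J^-}f - \int_{J^+}f\Bigr)\,\varepsilon(x),
\]
where $\varepsilon = +1$ on $J^{--}\cup J^{++}$ and $\varepsilon = -1$ on $J^{-+}\cup J^{+-}$. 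Applied to $I\in\mathcal{D}^+$ with parent $J = I^{(1)}$ and to any $g\geq 0$ supported on $I$, this collapses (because $I$ lies entirely in one half of $J$) to
\[
\bigl|\mathcal{P}^{l,l+1}(\mathcal{X}_I g)(x)\bigr| = \frac{\sqrt{2}}{|J|}\int_I g = \frac{1}{\sqrt{2}\,|I|}\int_I g \quad\text{for every } x\in J,
\]
whence the pointwise lower bound $\mathcal{P}^*(\mathcal{X}_I g)(x) \geq (\sqrt{2}\,|I|)^{-1}\int_I g$ for $x\in I$.

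To deduce $A_p^{dy}$, I would fix $I\in\mathcal{D}^+$ and test with the truncated function $f_N = \mathcal{X}_I\,\min\bigl(w^{-1/(p-1)},N\bigr)$. An elementary pointwise check gives $f_N^{p-1}w \leq 1$, hence $\|f_N\|_{L^p(w\,dx)}^p \leq \int_I f_N$. Combining this with the pointwise lower bound on $\mathcal{P}^*$ and the hypothesized boundedness constant $C$,
\[
\Bigl(\frac{1}{\sqrt{2}\,|I|}\int_I f_N\Bigr)^p w(I) \leq \|\mathcal{P}^* f_N\|_{L^p(w\,dx)}^p \leq C\int_I f_N.
\]
Dividing by $\int_I f_N$ and letting $N\to\infty$ by monotone convergence yields
\[
\frac{w(I)}{|I|}\Bigl(\frac{1}{|I|}\int_I w^{-1/(p-1)}\Bigr)^{p-1} \leq 2^{p/2}\,C,
\]
uniformly in $I\in\mathcal{D}^+$, which is precisely the dyadic $A_p$ condition.

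The conceptual heart of the argument is producing a lower bound for $\mathcal{P}^*$ that lives on $I$ itself rather than on some cousin of $I$; this is exactly what the choice $J = I^{(1)}$ delivers, the lower bound being uniform on the whole parent $J\supset I$. The sole technical nuisance is the truncation at height $N$, needed only to avoid assuming a priori local integrability of $w^{-1/(p-1)}$; this vanishes in the limit.
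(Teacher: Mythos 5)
Your proof is correct and follows essentially the same route as the paper's: the sufficiency direction is identical, and for necessity both arguments test the hypothesized $L^p(w\,dx)$ bound against (essentially) $w^{-1/(p-1)}\mathcal{X}_I$ and extract the $A_p$ product from a single-scale Haar term --- the paper uses the test function $h_{I_0}w^{-1/(p-1)}$ at the scale of $I_0$ itself, while you use the plain indicator at the parent scale $J=I^{(1)}$. Your truncation at height $N$ is a small but genuine refinement: the paper implicitly assumes $w^{-1/(p-1)}\in L^1_{loc}$ so that its test function lies in $L^p(\mathbb{R}^+,w\,dx)$, whereas your monotone-convergence argument removes that a priori assumption.
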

\begin{proof}
The sufficiency of $w\in A^{dy}_p(\mathbb{R}^+,dx)$ for the boundedness of $\mathcal{P}^*$ in $L^p(\mathbb{R}^+,wdx)$, $1<p<\infty$,
follows from \eqref{eq:maximalOperatorsrelationMaximalHL}, Theorem~\ref{thm:OperatorPApconditionnecessary} and Theorem~\ref{thm:MaximalconditionApETH}, since $Mf$ in $(\mathbb{R}^+,\delta,dx)$ is the dyadic Hardy-Littlewood maximal function $M_{dy}f$. Let us finally show that $A^{dy}_p(\mathbb{R}^+,dx)$ is necessary for the $L^p(\mathbb{R}^+,wdx)$. Assume that $w$ is a weight in $X$ such that $\mathcal{P}^*$ is bounded as an operator on $L^p(X,wd\mu)$. Since $\mathcal{P}^*f(x)\geq\abs{\sum_{I\in\mathcal{D},\abs{I}=\abs{I_0}}\proin{f}{h_I}(h_{I^-}(x)-h_{I^+}(x))}$ for
any $I_0\in\mathcal{D}$, taking $f=h_{I_0}w^{-1/(p-1)}$ we get
\begin{equation*}
\mathcal{P}^*f(x)\geq\Bigl<w^{-\tfrac{1}{p-1}}h_{I_0}, h_{I_0}\Bigr>\abs{h_{I^-_0}(x)-h_{I^+_0}(x)}
=\frac{1}{\abs{I_0}}\left(\int_{I_0}w^{-\tfrac{1}{p-1}}d\mu\right)\frac{\sqrt{2}}{\sqrt{\abs{I_0}}}\mathcal{X}_{I_0}(x).
\end{equation*}
Hence, from the inequality
$\int_X (\mathcal{P}^*f)^p wd\mu\leq C\int_X\abs{f}^p wd\mu$ that we are assuming, taking $f=h_{I_0}w^{-1/(p-1)}$ we get
\begin{equation*}
\frac{2^{p/2}}{\abs{I_0}^{3p/2}}\left(\int_{I_0}w^{-\tfrac{1}{p-1}}d\mu\right)^p w(I_0)
\leq C\frac{1}{\abs{I_0}^{p/2}}\int_{I_0}w^{-\tfrac{1}{p-1}}w d\mu
\end{equation*}
which implies that $w\in A^{dy}_p(\mathbb{R}^+,d\mu)$.
\end{proof}

As a final remark, let us observe that from the representation of the Hilbert kernel given in \cite{Petermichl00} and our result, we can get the well known weighted norm inequalities for the Hilbert transform.



\providecommand{\bysame}{\leavevmode\hbox to3em{\hrulefill}\thinspace}
\providecommand{\MR}{\relax\ifhmode\unskip\space\fi MR }
\providecommand{\MRhref}[2]{%
  \href{http://www.ams.org/mathscinet-getitem?mr=#1}{#2}
}
\providecommand{\href}[2]{#2}


\bigskip

\bigskip
\noindent{\footnotesize
\textsc{Instituto de Matem\'{a}tica Aplicada del Litoral, UNL, CONICET, FIQ.}

\smallskip
\noindent\textmd{CCT CONICET Santa Fe, Predio ``Alberto Cassano'', Colectora Ruta Nac.~168 km 0, Paraje El Pozo, S3007ABA Santa Fe, Argentina.}
}
\bigskip

\end{document}